\documentclass[11pt,a4paper]{article}

\usepackage{authblk}

\usepackage{mathrsfs}
\usepackage{amsfonts}
\usepackage{amsthm}
\usepackage{authblk}
\usepackage{cite}
\usepackage{amsmath,amssymb,amscd,amsthm}
\usepackage{graphics}
\usepackage{enumerate}
\input xy
\xyoption{all}

\numberwithin{equation}{section}

\usepackage{scalefnt}   

\usepackage{comment}
\usepackage{setspace}

\usepackage[paperwidth=210mm,
paperheight=297mm,
a4paper,
twoside,
top=0.8in, left=0.9in, right=0.9in, bottom=1in]{geometry}

\usepackage[active]{srcltx}

\theoremstyle{definition}
\newtheorem{lemma}{Lemma}[section]

\newtheorem{prop}[lemma]{Proposition}

\newtheorem{defi}[lemma]{Definition}
\newtheorem{thm}[lemma]{Theorem}

\usepackage[colorlinks=true, linkcolor=magenta, urlcolor=blue, citecolor=blue]{hyperref}

\newcommand{\C}{\mathbb{C}}

\newcommand{\Z}{\mathbb{Z}}

\def\a{\alpha}

\usepackage{authblk} 
\usepackage{hyperref}
\usepackage{float}
\restylefloat{figure}

\allowdisplaybreaks

\begin{document}
	\title{Generalized superderivations of the super Virasoro algebras}

    \author{Shun Liu\thanks{E-mail: 19020230157190@stu.xmu.edu.cn} \; and \, Dashu Xu\thanks{Corresponding author. E-mail: dox@mail.ustc.edu.cn}}

	\date{}
	\maketitle
	\begin{abstract}
	We explicitly determine the generalized superderivations of the Neveu-Schwarz and Ramond algebras.
	\bigskip
		
	\noindent {\em Key words: Generalized superderivation; Quasi superderivation; Super Virasoro algebra}
		
	\end{abstract}
	
	\section{Introduction}
     Super Virasoro algebras are significant algebraic objects in both mathematics and physics.
     They are super extensions of the Virasoro algebra, with their even part identical to the original Virasoro algebra, and play an important role in superconformal field theory and superstring theory.
     There exist two types of super-Virasoro algebras, namely the Neveu-Schwarz and Ramond algebras.
     These Lie superalgebras are generated by one bosonic current corresponding to the Virasoro algebra, and one fermionic current.
     In the field of Lie superalgebras, research on the super Virasoro algebras proceeds along two branches: structure and representation theory. 
     Abundant research achievements have been obtained concerning the super Virasoro algebras.
     In terms of representation theory, Verma modules have been thoroughly investigated in \cite{IK}, and complete classifications of simple Harish-Chandra modules and smooth modules have been established in \cite{S,CLL,CYZ,LPX}.
     Structural investigations have also yielded a wealth of substantial results. For instance, the classification of compatible left-symmetric superalgebra structures on super Virasoro algebras has been completed in \cite{KB}, while the local structural properties of super Virasoro algebras have been explored in \cite{W,DGL}.
     For additional academic findings, interested readers may consult the relevant references therein.
     
     The goal of this paper is to determine all generalized superderivations of the super Virasoro algebras.
     Generalized superderivations for Lie superalgebras were introduced in \cite{ZZ}, which is a super generalization of the concept for Lie algebras \cite{LL}.
     The authors of \cite{ZZ} studied generalized superderivations of finite-dimensional Lie superalgebras.
     The notion of generalized derivations has also been generalized to Hom-Lie algebras \cite{ZNC}, Lie conformal algebras \cite{FHS}, and other algebraic structures. 
     Interested readers may consult the citations of \cite{LL} and the review paper \cite{Kay}.
     The remainder of this paper is organized as follows.
     Section \ref{sec2} is devoted to recalling some basic notions necessary for the present paper.
     In Section \ref{sec3}, we completely determine the generalized superderivations of the Neveu-Schwarz and Ramond algebras.
     Our computations build upon existing investigations concerning generalized derivations of the Virasoro algebra \cite{KKS}.
     
     Throughout this paper, we use $\C$, $\Z$,  $\Z^*$ and $\Z_2$ to denote the complex number field, the set of integers, the set of non-zero integers, and the ring of integers modulo 2, respectively.
     
    \section{Preliminaries}\label{sec2}
    Let $\mathfrak{g}=\mathfrak{g}_{\overline{0}}\oplus\mathfrak{g}_{\overline{1}}$ be a Lie superalgebra.
    For $\theta\in\Z_2$, recall that $$\mathrm{End}(\mathfrak{g})_{\theta}=\left\{f\in\mathrm{End}(\mathfrak{g})\mid f(\mathfrak{g}_{\mu})\subseteq\mathfrak{g}_{\mu+\theta},\,\forall\mu\in\Z_2\right\}.$$
    
    \noindent A homogeneous superderivation of degree $\theta\in\Z_2$ is a map $f\in\mathrm{End}(\mathfrak{g})_{\theta}$ such that
    $$[f(x),y]+(-1)^{\alpha\theta}[x,f(y)]=f\left([x,y]\right)$$
    for all $x\in\mathfrak{g}_{\alpha}$ and $y\in\mathfrak{g}$.
    Denote the space of homogeneous superderivations of degree $\theta\in\Z_2$ by $\mathfrak{Der}(\mathfrak{g})_{\theta}$.
    The space $\mathfrak{Der}(\mathfrak{g})=\mathfrak{Der}(\mathfrak{g})_{\overline{0}}\oplus\mathfrak{Der}(\mathfrak{g})_{\overline{1}}$ is called the superderivation algebra of $\mathfrak{g}$.

    \begin{defi}
    A map $f\in\mathrm{End}(\mathfrak{g})_{\theta}$ is called a generalized superderivation of degree $\theta\in\Z_2$, provided that there exist $f',f''\in\mathrm{End}(\mathfrak{g})_{\theta}$ such that 
    $$[f(x),y]+(-1)^{\alpha\theta}[x,f'(y)]=f''([x,y])$$
    for $x\in\mathfrak{g}_{\alpha}$ and $y\in\mathfrak{g}$.
    If $f=f'$, then $f$ is called a quasi superderivation of degree $\theta\in\Z_2$.
    If $f=-f'$ and $f''=0$, then $f$ is called a super quasi-centroid map of degree $\theta\in\Z_2$.
    \end{defi}
    Denote by $\mathfrak{GDer}(\mathfrak{g})_{\theta}$, $\mathfrak{QDer}(\mathfrak{g})_{\theta}$, and $\mathfrak{QC}(\mathfrak{g})_{\theta}$ the spaces of generalized superderivations, quasi superderivations, and super quasi-centroid maps of degree $\theta\in\mathbb{Z}_2$, respectively.
    Define
    \begin{align*}
    	\mathfrak{GDer}(\mathfrak{g})&:=\mathfrak{GDer}(\mathfrak{g})_{\overline{0}}\oplus\mathfrak{GDer}(\mathfrak{g})_{\overline{1}},\\
    	\mathfrak{QDer}(\mathfrak{g})&:=\mathfrak{QDer}(\mathfrak{g})_{\overline{0}}\oplus\mathfrak{QDer}(\mathfrak{g})_{\overline{1}},\\
    	\mathfrak{QC}(\mathfrak{g})&:=\mathfrak{QC}(\mathfrak{g})_{\overline{0}}\oplus\mathfrak{QC}(\mathfrak{g})_{\overline{1}}.
    \end{align*}
    Due to \cite[Proposition 2.4]{ZZ}, we have 
    \begin{equation}\label{decom}
    \mathfrak{GDer}(\mathfrak{g})=\mathfrak{QDer}(\mathfrak{g})+\mathfrak{QC}(\mathfrak{g}).
	\end{equation}
    \textbf{The above statements reduce to the ordinary Lie algebra case \cite{LL} when $\mathfrak{g}_{\overline{1}}=0$}.
     
    Next, let us introduce the super Virasoro algebras.
    \begin{defi}
    For $\epsilon=0$ or $\dfrac{1}{2}$, the super Virasoro algebra $\mathfrak{SVir}[\epsilon]$ is a Lie superalgebra with a basis $\left\{L_m,G_r,c\mid m\in\mathbb{Z},\,r\in \epsilon+\mathbb{Z}\right\}$. Its non-trivial Lie superbrackets are given by
    \begin{align}
    	\label{eq1}
    	[L_m,L_n]&=(m-n)L_{m+n}+\dfrac{1}{12}(m^3-m)\delta_{m+n,0}c,\\
    	\label{eq2}
    	[L_m,G_r]&=\left(\dfrac{m}{2}-r\right)G_{m+r},\\
    	[G_r,G_s]&=2L_{r+s}+\dfrac{1}{12}(4r^2-1)\delta_{r+s,0}c.
    \end{align}
    The even part $\mathfrak{SVir}[\epsilon]_{\overline{0}}$ is spanned by $\left\{L_m,c\mid m\in\mathbb{Z}\right\}$, whereas the odd part $\mathfrak{SVir}[\epsilon]_{\overline{1}}$ is spanned by $\left\{G_r\mid r\in\epsilon+\mathbb{Z}\right\}$. Specifically, $\mathfrak{SVir}[0]$ is called the Ramond algebra, while $\mathfrak{SVir}\left[\dfrac{1}{2}\right]$ is called the Neveu–Schwarz algebra.
    \end{defi}
    In fact, the even part of $\mathfrak{SVir}[\epsilon]$ is precisely the Virasoro algebra, which we shall simply denote $\mathfrak{SVir}[\epsilon]_{\overline{0}}$ by $\mathfrak{Vir}$. It is known from \cite[Proposition 2.2]{ZM} that every derivation of $\mathfrak{Vir}$ is inner.
    
    Let $\mathrm{Ann}(\mathfrak{Vir})=\left\{g\in\mathrm{End}(\mathfrak{Vir})\mid g(\mathfrak{Vir})\subseteq\C c\right\}$.
    The following theorem then holds.
    \begin{thm}\cite[Theorem 16]{KKS}\label{QVir}
    	$\mathfrak{QDer}(\mathfrak{Vir})=\mathfrak{Der}(\mathfrak{Vir})\oplus\C \mathrm{id}_{\mathfrak{Vir}}\oplus\mathrm{Ann}(\mathfrak{Vir})$.
    \end{thm}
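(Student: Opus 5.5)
Since the statement is the cited result \cite[Theorem 16]{KKS}, I would reconstruct its proof directly for $\mathfrak{Vir}$. First I check that the right-hand side lies in $\mathfrak{QDer}(\mathfrak{Vir})$. A derivation $D$ qualifies with $f''=D$. The identity qualifies with $f''=2\,\mathrm{id}$. A map $g\in\mathrm{Ann}(\mathfrak{Vir})$ has central image, so $[g(x),y]+[x,g(y)]=0$ and we may take $f''=0$. The sum is direct. A derivation of $\mathfrak{Vir}$ is inner by \cite[Proposition 2.2]{ZM}, so it kills $c$. Hence a relation $D+\lambda\,\mathrm{id}+g=0$, evaluated at $c$, gives $\lambda c=-g(c)\in\C c$. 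Evaluated at $L_0$, and using that an inner derivation $\mathrm{ad}\,x$ takes $L_0$ into $[\mathfrak{Vir},\mathfrak{Vir}]$ with no $L_0$-component, it gives $\lambda=0$. Then $D=-g$ has central image. Since the center of $\mathfrak{Vir}$ is $\C c$, this forces $D=\mathrm{ad}(\mu c)=0$, and so $g=0$.

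For the reverse inclusion, let $f$ be a quasi derivation with $[f(x),y]+[x,f(y)]=f''([x,y])$. I would first reduce to homogeneous components with respect to the $\mathrm{ad}\,L_0$-grading. Decompose $f=\sum_k f_k$, where $f_k(L_m)\in \C L_{m+k}+\delta_{m+k,0}\C c$, and decompose $f''$ in the same way. The defining identity is homogeneous in degree, so each $f_k$ is a quasi derivation with $f''_k$. I will address at the end whether $f$ can really be written as such a sum.

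Fix $k$ and write $f_k(L_m)=a_mL_{m+k}$ modulo $c$, $f''_k(L_m)=b_mL_{m+k}$ modulo $c$. Comparing $L_{m+n+k}$-coefficients gives
\[ a_m(m+k-n)+a_n(m-n-k)=b_{m+n}(m-n). \]
I would solve this recursion by specializing $n=0$ and $n=m$ and then working through small values. The expected solutions are two. For $k\neq0$, $a_m=\alpha(m-k)$, which is $\mathrm{ad}\,L_k$ up to scalar. For $k=0$, $a_m=\alpha m+\beta$, i.e.\ $\mathrm{ad}\,L_0$ plus $\beta\,\mathrm{id}$. The identity contributes to $f_0$ only. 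Subtracting these pieces leaves an $f_k$ that takes $\mathfrak{Vir}$ into $\C c$. The value $f(c)$ is unconstrained, because $c$ is central; in degree $0$ it can be $\lambda c$, which is absorbed by $\mathrm{Ann}$. I would also check that the $c$-terms in the identity impose no further conditions on the $L$-parts.

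The main obstacle is passing from the homogeneous components back to $f$ itself, since $f$ may have infinitely many nonzero components. The components $f_k=\alpha_k\,\mathrm{ad}\,L_k$ (modulo $\mathrm{Ann}$) must assemble into a well-defined map. That $f(L_m)$ is a finite sum only shows that, for each fixed $m$, $\alpha_k(m-k)=0$ for almost all $k$. This does not yet give finitely many $\alpha_k$, so I must argue directly. Suppose infinitely many $\alpha_k$ are nonzero. Then $f(L_0)=\sum_k\alpha_k(-k)L_k$ is not a finite sum, which contradicts $f(L_0)\in\mathfrak{Vir}$. Hence $\sum_k\alpha_k\,\mathrm{ad}\,L_k=\mathrm{ad}\,x$ for some $x\in\mathfrak{Vir}$, and it is a genuine derivation. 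The remainder $f-\mathrm{ad}\,x-\beta\,\mathrm{id}$ has all components central-valued. Since $\C c$ has degree $0$, the map itself sends $\mathfrak{Vir}$ into $\C c$, so it lies in $\mathrm{Ann}(\mathfrak{Vir})$.
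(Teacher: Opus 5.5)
The paper gives no proof of this statement; it is quoted from the cited reference. Your argument therefore has to stand on its own.

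The inclusion of the right-hand side, the directness of the sum, and the final assembly step are fine. (In fact, for a single fixed $m$, finiteness of $f(L_m)$ already forces $\alpha_k(m-k)=0$ for almost all $k$, hence $\alpha_k=0$ for almost all $k$.)

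The genuine gap is in solving the degree-$k$ recursion. The equation
\[ a_m(m+k-n)+a_n(m-n-k)=b_{m+n}(m-n) \]
does not force $a_m=\alpha(m-k)$ when $k\neq0$. Every $a_m=\alpha m+\beta$ solves it, with $b_s=\alpha(s+k)+2\beta$. For example, $a_m\equiv1$, $b\equiv2$ works because $(m+k-n)+(m-n-k)=2(m-n)$. This is not an artifact. On the Witt algebra $\C[t^{\pm1}]\frac{d}{dt}$, the map $L_m\mapsto L_{m+k}$ (multiplication by $t^k$) is a quasi-derivation with $f''=2f$, and it is not of the form $\mathrm{ad}\,x+\lambda\,\mathrm{id}$. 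So no argument that only compares $L$-coefficients can exclude it. Such an argument would apply verbatim to the Witt algebra, where the analogous statement is false.

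The missing ingredient is exactly what you propose to set aside. The $c$-terms do impose conditions on the $L$-parts, and they are what kills the extra solutions. For $k\neq0$ and $n=-m-k$, comparing $c$-coefficients gives
\[ \tfrac{1}{12}\bigl[a_m\bigl((m+k)^3-(m+k)\bigr)+a_{-m-k}(m^3-m)\bigr]=(2m+k)\gamma_k, \]
where $\gamma_k$ is the $c$-coefficient of $f''(L_{-k})$. For $a_m=\alpha m+\beta$ the left side equals $\tfrac{1}{12}(2m+k)\bigl[\alpha k\,m(m+k)+\beta(m^2+mk+k^2-1)\bigr]$. Requiring this to equal $(2m+k)\gamma_k$ for all $m$ forces, via the $m^2$-coefficient, $\alpha k+\beta=0$, i.e.\ $a_m=\alpha(m-k)$.

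So you must do two things. First, actually determine the solution space of the $L$-recursion; note that your specialization $n=m$ gives only $0=0$. Second, impose this cocycle condition on it.

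Two smaller corrections:
\begin{itemize}
\item $f(c)$ is not unconstrained. Taking $x=c$ gives $[f(c),y]=0$ for all $y$, so $f(c)\in\C c$. You need this for the remainder to land in $\mathrm{Ann}(\mathfrak{Vir})$.
\item At $m+n=0$ your recursion omits the $L_k$-component of $f''(c)$, so it is only valid for $m+n\neq0$.
\end{itemize}
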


    \section{Generalized superderivations of the super Virasoro algebras}\label{sec3}
    For $\epsilon=0$ or $\dfrac{1}{2}$, define
    $$\mathrm{Ann}\left(\mathfrak{SVir}[\epsilon]\right):=\left\{f\in\mathrm{End}\left(\mathfrak{SVir}[\epsilon]\right)\mid f\left(\mathfrak{SVir}[\epsilon]\right)\subseteq\C c \right\}.$$
    For $\theta\in\Z_2$, denote by $\mathrm{Ann}\left(\mathfrak{SVir}[\epsilon]\right)_{\theta}=\mathrm{Ann}\left(\mathfrak{SVir}[\epsilon]\right)\cap\mathrm{End}\left(\mathfrak{SVir}[\epsilon]\right)_\theta$.
    In the following two subsections, we will determine the generalized superderivations of the Ramond and the Neveu–Schwarz algebras, respectively.
    \subsection{Generalized superderivations of the Ramond algebra}

     According to \eqref{decom}, we determine the quasi superderivations of the Ramond algebra first.
    \begin{thm}\label{QSD-R}
        $\mathfrak{QDer}(\mathfrak{SVir}[0])=\mathfrak{Der}(\mathfrak{SVir}[0])\oplus\C \mathrm{id}_{\mathfrak{SVir}[0]}\oplus\mathrm{Ann}(\mathfrak{SVir}[0])$. 
    \begin{proof}        
        Step 1.
        $\mathfrak{QDer}(\mathfrak{SVir}[0])_{\overline{0}}=\mathfrak{Der}(\mathfrak{SVir}[0])_{\overline{0}}\oplus\C \mathrm{id}_{\mathfrak{SVir}[0]}\oplus\mathrm{Ann}(\mathfrak{SVir}[0])_{\overline{0}}.$

        Let $f$ be a quasi superderivation of degree $\overline{0}\in\Z_2$ with the corresponding even map $f'\in\mathrm{End}\left(\mathfrak{SVir}[0]\right)_{\overline{0}}$.
        Then, 
        $f\upharpoonright_{\mathfrak{Vir}}:\mathfrak{Vir}\to\mathfrak{Vir}$ is a quasi derivation of $\mathfrak{Vir}$.
        According to Theorem \ref{QVir},
        we may assume $f\upharpoonright_{\mathfrak{Vir}}\in\mathrm{Ann}\left(\mathfrak{Vir}\right)$.
        
        Suppose
        \begin{align*}
            f\left ( G_m \right ) =\sum_{k \in \Z}Y(m,k) G_k,
            \quad
            f'\left ( G_m \right ) =\sum _{k \in \Z}Y'(m,k)G_k,\quad\text{and}\quad f\left ( \mathfrak{Vir}\right) \subseteq\C c.
       \end{align*}
    From the equation
    \[
    \left [ f\left ( G_m \right ) ,G_n \right ] +\left [ G_m, f\left ( G_n \right )  \right ] =f'\left ( [ G_m,G_n  ]  \right )=0,
    \]
    we obtain
    \begin{equation}\label{GR-11}
        Y\left ( m,k-n \right )+Y \left ( n,k-m \right )=0.
    \end{equation}
    Setting $m=n$ in Equation \eqref{GR-11} yields 
    \[
    Y(m,n)=0 \quad \text{for }   m, n\in \Z.
    \]
    This shows that $f \in \mathrm{Ann}(\mathfrak{SVir}[0])_{\overline{0}}$.
    Since $\mathfrak{SVir}[0]$ is a perfect Lie algebra,
    we deduce that $f'=0$.
    Consequently, we have
    \[
    \mathfrak{QDer}(\mathfrak{SVir}[0])_{\overline{0}}=\mathfrak{Der}(\mathfrak{SVir}[0])_{\overline{0}}\oplus\C \mathrm{id}_{\mathfrak{SVir}[0]}\oplus\mathrm{Ann}(\mathfrak{SVir}[0])_{\overline{0}}.
    \]
   Step 2.
        $\mathfrak{QDer}(\mathfrak{SVir}[0])_{\overline{1}}=\mathfrak{Der}(\mathfrak{SVir}[0])_{\overline{1}}\oplus\mathrm{Ann}(\mathfrak{SVir}[0])_{\overline{1}}.$

        Let $h$ be a quasi superderivation of degree $\overline{1}\in\Z_2$ with the corresponding odd map $h'\in\mathrm{End}\left(\mathfrak{SVir}[0]\right)_{\overline{1}}$. 
        For $x \in \mathfrak{SVir}[0]_{\a}$ and $y \in \mathfrak{SVir}[0]$,
        by definition we have 
        \begin{equation}\label{GR-2}
        \left [ h\left ( x \right ),y  \right ] +\left ( -1 \right )^{\alpha}  \left [x, h\left ( y \right )\right ]=h'\left ( \left [ x,y \right ]  \right ). 
        \end{equation}
      
        Assume that 
        \begin{align*}
            h\left ( L_m \right ) & =\sum_{k \in \Z}B\left ( m,k \right ) G_k, \qquad \qquad\qquad h'\left ( L_m \right )  =\sum_{k \in \Z}B'\left ( m,k \right ) G_k, \\
            h\left ( G_m \right ) &=\sum_{k \in \Z}X\left ( m,k \right ) L_k+Z\left ( m \right ) c, \qquad h'\left ( G_m \right ) =\sum_{k \in \Z}X'\left ( m,k \right ) L_k+Z'\left ( m \right ) c,\\
            h\left ( c \right ) &=0, \qquad \qquad\qquad\qquad\qquad \quad \qquad h'\left ( c \right ) =\sum_{k \in \Z}F'\left ( k \right ) G_k. 
        \end{align*}
        Define a superderivation of degree $\overline{1}\in\Z_2$ as follows:
        \[
        \varphi =\dfrac{1}{2}\sum _{k \in \Z}X\left ( 0,k \right )\mathrm{ad}\left ( G_k \right ).
        \]
        Replacing $h$ by $h-\varphi$,
        it follows that one can assume $ X\left ( 0,k \right )=0$ for all $k \in \Z$.
        
        Substituting $\left ( x,y \right )  =\left ( L_m,G_n \right ) $ into Equation \eqref{GR-2} and comparing the coefficients of $L_k$ and $c$,
        we obtain
        \begin{align}
            2B\left ( m,k-n \right ) +\left ( 2m-k \right ) X\left ( n,k-m \right ) &=\left ( \dfrac{m}{2}-n  \right )X'\left ( m+n,k \right ), \label{GR-21} \\ 
            \left ( 4n^2-1 \right ) B\left ( m,-n \right ) +\left ( m^3-m \right ) X\left ( n,-m \right ) &=\left ( 6m-12n \right )Z'\left ( m+n \right ). \label{GR-22} 
        \end{align}
       Setting $n=0$ in Equation \eqref{GR-21} yields
       \[
       4B\left ( m,k \right ) =mX'\left ( m,k \right ), 
       \]
        which shows $B\left ( 0,k \right ) =0$ for $k \in \Z$.
        Substituting $m=0$ in Equation \eqref{GR-21} gives
        \[
        kX\left ( n,k \right ) =nX'\left ( n,k \right ).
        \]
        From Equation \eqref{GR-21},
        we obtain
        \[
        2\left ( m+n \right ) B\left ( m,k-n \right ) +\left ( m+n \right ) \left ( 2m-k \right )  X\left ( n,k-m \right )=\left ( \dfrac{m}{2}-n  \right )\left ( m+n \right ) X'\left ( m+n,k \right ).
        \]
        From the foregoing relationships between $X$, $B$, and $X'$, we have
    \begin{equation}\label{Gr-23}
    (m+n)(k-n)X(m,k-n)+2(m+n)(2m-k)X(n,k-m)=(m-2n)kX(m+n,k).
    \end{equation}
    Setting $k=0$ in Equation \eqref{Gr-23} yields
    \begin{equation}\label{Gr-24}
    -n(m+n)X(m,-n)+4m(m+n)X(n,-m)=0.
    \end{equation}

    Substituting $m=0$ in Equation \eqref{GR-22} gives $Z'(m)=0$ for $m\in\mathbb{Z}^*$.
    If $m+n\neq0$ in Equation  \eqref{GR-22}, we obtain
    \[
    (4n^2-1)B(m,-n)+(m^3-m)X(n,-m)=0.
    \]
    Applying the relationships among $X$, $B$, and $X'$ to the above equation yields
    \begin{equation}\label{Gr-25}
    -n(4n^2-1)X(m,-n)+4(m^3-m)X(n,-m)=0.
    \end{equation}

    Combining Equations \eqref{Gr-24} and \eqref{Gr-25}, we have
    \[
    m(m-2n)(m+2n)X(n,-m)=0.
    \]
    Hence
    $X(m,n)=0$ for $ n\neq0, n\neq m,\text{and } n\neq\pm2m$.
    
    (1) Setting $k=m=n\neq0$ in Equation \eqref{Gr-23} yields  
    \[
    4m^2X(m,0)=-m^2X(2m,m).
    \]  
    Since $m\neq0$ (which implies $m\neq2m$ and $m\neq\pm4m$), we obtain $X(2m,m)=0$. Consequently,  
    \[
    X(m,0)=0\quad\text{for } m\in\mathbb{Z}^*.
    \]
    (2) Setting $m\neq0$, $n=m$, and $k=-m$ in Equation \eqref{Gr-23} gives  
    \[
    8m^2X(m,-2m)=m^2X(2m,-m).
    \]  
    Because $m\neq0$ (so that $-m\neq2m$ and $-m\neq\pm4m$), we have $X(2m,-m)=0$. Hence  
    \[
    X(m,-2m)=0\quad\text{for } m\in\mathbb{Z}^*.
    \]
    (3) Setting $m\neq0$, $n=m$, and $k=3m$ in Equation \eqref{Gr-23} leads to  
    \[
    4m^2X(m,2m)=3m^2X(2m,3m).
    \]  
    Since $m\neq0$ (which ensures $3m\neq2m$ and $3m\neq\pm4m$), it follows that $X(2m,3m)=0$. Thus  
    \[
    X(m,2m)=0\quad\text{for } m\in\mathbb{Z}^*.
    \]
    (4) Substituting $k=m+n$ into Equation \eqref{Gr-23} yields, for $m+n\neq0$,
    \begin{equation}\label{Gr-26}
    mX(m,m)+2(m-n)X(n,n)=(m-2n)X(m+n,m+n).
    \end{equation}
    Evaluating Equation \eqref{Gr-26} at specific pairs $(m,n)$ gives the following relations:
    \begin{align*}
    (m,n)=(1,1)\quad\Rightarrow \quad& X(2,2) = -X(1,1),\\
    (m,n)=(1,2)\quad\Rightarrow \quad& X(3,3) = -X(1,1),\\
    (m,n)=(3,1)\quad\Rightarrow \quad& X(4,4) = X(1,1),\\
    (m,n)=(4,1)\quad\Rightarrow \quad& X(5,5) = 5X(1,1),\\
    (m,n)=(2,3)\quad\Rightarrow \quad& X(5,5) = 0.
    \end{align*}
    The last two equalities together imply $X(1,1)=0$, and consequently
    \[
    X(n,n)=0\quad\text{for } n=1,2,3,4,5.
    \]
    Now assume $n\ge5$ and that $X(n,n)=0$. Setting $m=1$ in Equation \eqref{Gr-26} (which is valid since $n+1\neq0$) gives
    \[
    X(1,1)+2(1-n)X(n,n)=(1-2n)X(n+1,n+1).
    \]
    Hence $X(n+1,n+1)=0$. By induction, $X(n,n)=0$ for all $n\ge1$.
    For negative indices, 
    take $m<0$ and set $n=1-2m$ in Equation \eqref{Gr-26}. 
    This yields $X(m,m)=0$.
    We conclude that
    \[
    X(n,n)=0\quad\text{for all } n\in\mathbb{Z}^*,
    \]
   and thus
    \[
    X(m,n)=0\quad\text{for all } m, n\in\mathbb{Z}.
    \]

    Since $4B(m,k)=mX'(m,k)$ and $kX(n,k)=nX'(n,k)$, we have  
    \[
    4B(m,k)=kX(m,k)=0.
    \]  
    Hence $h \in \mathrm{Ann}(\mathfrak{SVir}[0])_{\overline{1}}$.
    Consequently,  
    we obtain
    \[
    \mathfrak{QDer}(\mathfrak{SVir}[0])_{\overline{1}}=\mathfrak{Der}(\mathfrak{SVir}[0])_{\overline{1}} \oplus \mathrm{Ann}(\mathfrak{SVir}[0])_{\overline{1}}.
    \]

    Combining Steps 1 and 2, we obtain the direct sum decomposition  
    \[
    \mathfrak{QDer}(\mathfrak{SVir}[0])=\mathfrak{Der}(\mathfrak{SVir}[0])\oplus\mathbb{C}\,\mathrm{id}_{\mathfrak{SVir}[0]}\oplus\mathrm{Ann}(\mathfrak{SVir}[0]). \qedhere
    \]  
    \end{proof}
    \end{thm}
     Our next step is to compute the super quasi-centroid maps of the Ramond algebra.
     
    \begin{prop}\label{SQC-R}
     $ \mathfrak{QC}(\mathfrak{SVir}[0])=\mathbb{C}\mathrm{id}_{\mathfrak{SVir}[0]}\oplus\mathrm{Ann}(\mathfrak{SVir}[0]).$   
     \begin{proof}
         Let $f$ be a homogeneous super quasi-centroid map of $\mathfrak{SVir}[0]$.
         If $f \in\mathrm{End}\left(\mathfrak{SVir}\left[0\right]\right)_{\overline{0}}$,
         then for $x,y\in\mathfrak{SVir}[0]$, 
         \begin{equation}\label{SQC-R1}
         \left [ f\left ( x \right ), y  \right ] -\left [ x, f\left ( y \right )  \right ]=0.   
         \end{equation}
         Assume further that 
         \begin{equation}\label{SQC-R1-1}
         \begin{aligned}
           f\left ( L_m \right) &=\sum_{k \in \Z}A \left( m,k \right)L_k+Z\left ( m\right )c, \\
          f\left ( G_n \right) &=\sum_{k \in \Z}Y\left ( n,k \right )G_k, \\
          f\left ( c \right) & = Ec.
         \end{aligned}  
         \end{equation}

        Substituting $\left ( x,y \right ) =\left ( L_m, L_n \right )$ into Equation \eqref{SQC-R1},
        we have 
     \begin{align}
         \left ( k-2n \right )A\left ( m,k-n \right )&=\left ( 2m-k \right )A\left ( n,k-m \right ),   \label{SQC-R2} \\
         \left ( n-n^3 \right ) A\left ( m,-n \right ) &=\left ( m^3-m \right ) A\left ( n,-m \right ). \label{SQC-R3} 
     \end{align}
     Setting $k=2m$ in Equation \eqref{SQC-R2} gives
     \[
      A\left ( m,n \right ) =0 \quad \text{for } m \ne n.
     \]
    Substituting $\left ( k,n \right ) =\left ( m+1,1 \right ) $ into Equation \eqref{SQC-R2} yields
    \[
    A\left ( m,m \right )=A\left ( 1,1 \right )  \quad \text{for } m \ne 1.
    \]
    Hence,
    we have 
    \[
    A\left ( m,n \right )=\delta_{mn} A\left ( 1,1 \right ) \quad \text{for all } m, n \in \Z.
    \]

      Substituting $\left ( x,y \right ) =\left ( L_m, G_n \right )$ into Equation \eqref{SQC-R1},
      we obtain
      \[
      \sum_{k \in \Z} \left ( k-3n \right ) A\left ( m,k-n \right ) G_k=\sum_{k \in \Z} \left ( 3m-2k\right ) Y\left ( n,k-m \right ) G_k.  
      \]
     It follows that
     \begin{equation}\label{SQC-R5}
         \left ( k-3n\right ) A\left ( m,k-n \right )=\left ( 3m-2k\right ) Y\left ( n,k-m \right ). 
     \end{equation}
    Setting $\left ( k,m \right ) =\left ( 3n+1,2n+1 \right ) $ in Equation 
    \eqref{SQC-R5} gives
    \[
    Y\left ( n,n\right ) =A\left ( 2n+1,2n+1 \right )=A\left ( 1,1 \right ). 
    \]
    Taking $\left ( k,m\right ) =\left ( 3n+3l+1, 2n+2l+1 \right ) $ with $l \ne0$ in Equation 
    \eqref{SQC-R5},
    we obtain 
    \[
    Y\left ( n,n+l\right ) =\left ( 3l+1 \right ) A\left ( 2n+2l+1,2n+3l+1 \right )=0.
    \]
    Hence,
    \[
    Y\left ( m,n\right ) =\delta_{mn} A\left ( 1,1\right ) \quad \text{for } m, n \in \Z. 
    \]

    Now,
    Equation \eqref{SQC-R1-1} reduces to 
    \begin{align*}
          f\left ( L_m \right) &=A \left( 1,1 \right)L_m+Z\left ( m\right )c, \\
          f\left ( G_n \right) &=A\left ( 1,1 \right )G_n, \\
          f\left ( c \right) & = Ec,
    \end{align*} 
    which implies
    \[
     \mathfrak{QC}(\mathfrak{SVir}[0])_{\overline{0}} =\mathbb{C}\mathrm{id}_{\mathfrak{SVir}[0]}\oplus\mathrm{Ann}(\mathfrak{SVir}[0])_{\overline{0}}.
    \]

    Now, suppose $h \in\mathrm{End}\left(\mathfrak{SVir}\left[0\right]\right)_{\overline{1}}$.
    Then, for $x\in\mathfrak{SVir}\left[0\right]_\alpha$ and $y\in\mathfrak{SVir}\left[0\right]$, we have 
    \begin{equation}\label{SQC-T1}
    \left [ h\left ( x \right ), y  \right ] =\left ( -1 \right )^{\alpha}  \left [ x, h\left ( y \right )  \right ].   
    \end{equation}
    Assume that 
    \begin{equation}\label{SQC-T1-1}
    \begin{aligned}
     h\left ( L_m \right) &=\sum_{k \in \Z}B \left( m,k \right)G_k, \\
          h\left ( G_n \right) &=\sum_{k \in \Z}X\left ( n,k \right )L_k+Z\left ( n \right )c, \\
          h\left ( c \right) & = 0.
         \end{aligned}  
         \end{equation}
    
    Substituting $\left ( x,y \right ) =\left ( L_m, L_n \right )$ into Equation \eqref{SQC-T1} gives
    \begin{equation}\label{SQC-T1-2}
    \left ( 2k-3n \right ) B\left ( m,k-n \right ) =\left ( 3m-2k \right ) B\left ( n,k-m \right ).
     \end{equation}
    Setting $m=n$ in the Equation \eqref{SQC-T1-2} yields
    \[
     \left ( 3m-2k \right ) B\left ( m,k-m \right )=0.
    \]
    It follows that 
    $B\left ( m,n \right )=0$  for $m\ne n$.  
    Taking $ \left ( k,m \right )= \left( 2n,n \right )$ in Equation \eqref{SQC-T1-2} yields
    \[
    B\left ( m, m \right )=0  \quad \text{for } m \ne 0. 
    \]
     Setting $ \left ( m, n, k \right )= \left( 1, 0, 1 \right )$ in Equation \eqref{SQC-T1-2} gives
     $
     B\left ( 0, 0\right )=2B\left ( 1, 1\right )=0$.
     Hence,
     we obtain
     \[
    B\left ( m, n \right )=0  \quad \text{for } m, n \in \Z. 
    \]

    Substituting  $\left ( x,y \right ) =\left ( L_m, G_n \right )$ into Equation \eqref{SQC-T1} yields 
    $\left ( 2m-k \right ) X \left ( n, k-m\right )=0$.
    Taking $ k=2m+1$ gives
     \[
     X\left ( m, n \right )=0 \quad \text{for all } m, n \in \Z.
     \]
    Thus $h \in \mathrm{Ann}(\mathfrak{SVir}[0])_{\overline{1}}$.
    Consequently, we have
    $
    \mathfrak{QC}(\mathfrak{SVir}[0])=\mathbb{C}\mathrm{id}_{\mathfrak{SVir}[0]}\oplus\mathrm{Ann}(\mathfrak{SVir}[0])$.
    \end{proof}
    \end{prop}
    
    Combining Theorem \ref{QSD-R} and the above proposition, we conclude that $$\mathfrak{GDer}(\mathfrak{SVir}[0])=\mathfrak{QDer}(\mathfrak{SVir}[0])=\mathfrak{Der}(\mathfrak{SVir}[0])\oplus\C \mathrm{id}_{\mathfrak{SVir}[0]}\oplus\mathrm{Ann}(\mathfrak{SVir}[0]).$$
    Thus, we have determined all generalized superderivations of the Ramond algebra.
    
    \subsection{Generalized superderivations of the Neveu–Schwarz  algebra}
    Similar to the case of Ramond algebra, we first determine the quasi superderivations of the Neveu–Schwarz algebra.
    \begin{thm}\label{QSD-NS}
        $\mathfrak{QDer}\left(\mathfrak{SVir}\left[\dfrac{1}{2} \right]\right)=\mathfrak{Der}\left(\mathfrak{SVir}\left[\dfrac{1}{2} \right]\right)\oplus\C \mathrm{id}_{\mathfrak{SVir}\left[\frac{1}{2} \right]}\oplus\mathrm{Ann}\left(\mathfrak{SVir}\left[\dfrac{1}{2} \right]\right).$ 
        \begin{proof}
         Step 1.
        $\mathfrak{QDer}\left(\mathfrak{SVir}\left[\dfrac{1}{2} \right]\right)_{\overline{0}}=\mathfrak{Der}\left(\mathfrak{SVir}\left[\dfrac{1}{2} \right]\right)_{\overline{0}}\oplus\C \mathrm{id}_{\mathfrak{SVir}\left[\frac{1}{2} \right]}\oplus\mathrm{Ann}\left(\mathfrak{SVir}\left[\dfrac{1}{2} \right]\right)_{\overline{0}}.$

        Let $f$ be a quasi superderivation of degree $\overline{0}\in\Z_2$ with the corresponding even map $f'\in\mathrm{End}\left(\mathfrak{SVir}\left[\dfrac{1}{2} \right]\right)_{\overline{0}}$.
        Then, 
        $f\upharpoonright_{\mathfrak{Vir}}:\mathfrak{Vir}\to\mathfrak{Vir}$ is a quasi derivation of $\mathfrak{Vir}$.
        By Theorem \ref{QVir},
        we may assume $f\upharpoonright_{\mathfrak{Vir}}\in\mathrm{Ann}\left(\mathfrak{Vir}\right)$.
        
        Suppose $f\left ( \mathfrak{Vir}\right)  \subseteq\C c$ and
        \begin{align*}
            f\left ( G_{m+\frac{1}{2} } \right ) &=\sum_{k \in \Z}Y(m+\dfrac{1}{2},k+\dfrac{1}{2}) G_{k+\frac{1}{2}},\\
            f'\left ( G_{m+\frac{1}{2} } \right ) &=\sum _{k \in \Z}Y'(m+\dfrac{1}{2},k+\dfrac{1}{2})G_{k+\frac{1}{2}}.
       \end{align*}
    From the equation
    \[
    \left [ f\left ( G_{ m+\frac{1}{2}} \right ) ,G_{n+\frac{1}{2} } \right ] +\left [ G_{ m+\frac{1}{2}}, f\left ( G_{n+\frac{1}{2} } \right )  \right ] =f'\left ( \left [  G_{ m+\frac{1}{2}},G_{n+\frac{1}{2}}  \right ]\right ),
    \]
    we deduce that
    \begin{equation}\label{QSD-NS-1-1}
       Y\left ( m+\dfrac{1}{2} ,k-n +\dfrac{1}{2}\right )+Y \left ( n+\dfrac{1}{2},k-m+\dfrac{1}{2} \right )=0.
    \end{equation} 
    Setting $m=n$ in Equation \eqref{QSD-NS-1-1} gives
    \[
    Y\left ( m+\dfrac{1}{2}, n+\dfrac{1}{2}\right )=0  \quad\text{for } m, n \in \Z.
    \]
    This shows that $f \in \mathrm{Ann}\left(\mathfrak{SVir}\left[\dfrac{1}{2} \right]\right)_{\overline{0}}$.
    Since $\mathfrak{SVir}\left[\dfrac{1}{2}\right]$ is a perfect Lie algebra,
    we deduce that $f'=0$.
    Consequently, we obtain
    \[
    \mathfrak{QDer}\left(\mathfrak{SVir}\left[\dfrac{1}{2} \right]\right)_{\overline{0}}=\mathfrak{Der}\left(\mathfrak{SVir}\left[\dfrac{1}{2} \right]\right)_{\overline{0}}\oplus\C \mathrm{id}_{\mathfrak{SVir}\left[\frac{1}{2} \right]}\oplus\mathrm{Ann}\left(\mathfrak{SVir}\left[\dfrac{1}{2} \right]\right)_{\overline{0}}.
    \]

    Step 2.
        $\mathfrak{QDer}\left(\mathfrak{SVir}\left[\dfrac{1}{2} \right]\right)_{\overline{1}}=\mathfrak{Der}\left(\mathfrak{SVir}\left[\dfrac{1}{2} \right]\right)_{\overline{1}}\oplus\mathrm{Ann}\left(\mathfrak{SVir}\left[\dfrac{1}{2} \right]\right)_{\overline{1}}.$

        Let $h$ be a quasi superderivation of degree $\overline{1}\in\Z_2$ with the corresponding odd map $h'\in\mathrm{End}\left(\mathfrak{SVir}\left[\dfrac{1}{2}\right]\right)_{\overline{1}}$. 
        For $x \in \mathfrak{SVir}\left[\dfrac{1}{2}\right]_{\a}$ and $y \in \mathfrak{SVir}\left[\dfrac{1}{2}\right]$,
        by definition we have 
        \begin{equation}\label{GNS-2}
        \left [ h\left ( x \right ),y  \right ] +\left ( -1 \right )^{\alpha}  \left [x, h\left ( y \right )\right ]=h'\left ( \left [ x,y \right ]  \right ). 
        \end{equation}

        Assume that 
        \begin{align*}
    h(L_m) &= \sum_{k\in\Z} B\left(m,k+\dfrac{1}{2}\right) G_{ k+\frac{1}{2}}, &
    h'(L_m) &= \sum_{k\in\Z} B'\left(m,k+\dfrac{1}{2}\right) G_{k+\frac12}, \\[10pt]
     h \left(G_{m+\frac12}\right) &= \sum_{k\in\Z} X\left(m+\dfrac{1}{2},k\right) L_k + Z\left(m+\dfrac{1}{2}\right) c, &
     h'\left(G_{m+\frac12}\right) &=  \sum_{k\in\Z} X'\left(m+\dfrac{1}{2},k\right) L_k + Z'\left(m+\dfrac{1}{2}\right) c, \\[10pt]
    h(c) &= 0, &
    h'(c) &= \sum_{k\in\Z} F'\left(k+\dfrac{1}{2}\right) G_{k+\frac12}.
    \end{align*}
        Define a superderivation of degree $\overline{1}\in\Z_2$ as follows:
        \[
        \varphi =\sum _{k \in \Z}\dfrac{2B\left ( 0,k+\dfrac{1}{2}  \right )}{2k+1}\mathrm{ad}\left ( G_{k+\frac{1}{2}}\right ).
        \]
        Replacing $h$ by $h-\varphi$, it follows that one can assume $ B\left ( 0,k+\dfrac{1}{2} \right )=0$ for all $k \in \Z$.

        Substituting $\left ( x,y \right )  =\left ( L_m,G_{n+\frac{1}{2}}\right ) $ into Equation \eqref{GNS-2} and comparing the coefficients of $L_k$ and $c$,
        we obtain
        \begin{align}
           4B\left ( m,k-n-\dfrac{1}{2}  \right ) +\left ( 4m-2k \right ) X\left ( n+\dfrac{1}{2} ,k-m \right ) &=\left ( m-2n-1  \right )X'\left ( m+n+\dfrac{1}{2} ,k \right ), \label{GNS21}  \\ 
            \left ( 4n^2+4n \right ) B\left ( m,-n-\dfrac{1}{2} \right ) +\left ( m^3-m \right ) X\left ( n+\dfrac{1}{2} ,-m \right ) &=\left ( 6m-12n-6 \right )Z'\left ( m+n+\dfrac{1}{2} \right ). \label{GNS22} 
        \end{align}
    Setting $m=0$ in Equation \eqref{GNS21} yields
    \begin{equation}\label{GNS22-1}
       kX\left ( n+\dfrac{1}{2},k  \right ) =\left ( n+\dfrac{1}{2} \right )X'\left ( n+\dfrac{1}{2},k \right ).   
    \end{equation}    
    Taking $k=0$ in the Equation \eqref{GNS22-1} gives
    \[
        \left ( 2n+1 \right )X'\left ( n+\dfrac{1}{2}, 0 \right ) =0,  
    \]
    hence $X'\left ( n+\dfrac{1}{2}, 0 \right ) =0 $  for all $n \in \Z$.
    Substituting $k=0$ into Equation \eqref{GNS21} gives
    \begin{equation}\label{GNS23-1} 
        B\left ( m,-n-\dfrac{1}{2}\right )+mX\left (n+\dfrac{1}{2},-m \right)=0.  
    \end{equation}
    Setting $m=0$ in Equation \eqref{GNS22} yields
    \[
    \left ( 2n+1 \right ) Z'\left (n+\dfrac{1}{2} \right)=0,  
    \]
    which implies that $Z'\left (n+\dfrac{1}{2} \right)=0$ for all $ n \in \Z$.
    Combining Equations \eqref{GNS22} and  \eqref{GNS23-1},
    we obtain 
 \[
m\left ( 2n+1 +m \right )\left ( 2n+1 -m \right ) X\left ( n+\dfrac{1}{2},m  \right )=0.
 \]
    Thus
    \[
    X\left ( n+\dfrac{1}{2},m  \right )=0 \quad \text{for } m\ne0, \text{  } m \ne \pm\left ( 2n+1 \right ).
    \]
    In particular,
    $X\left ( n+\dfrac{1}{2},m  \right )=0$ whenever $m\in2\Z^*$.

    Now, combine Equations \eqref{GNS21}, \eqref{GNS22-1}, and \eqref{GNS23-1}, we have
    \begin{align}
     &-2m\left ( m+n+\dfrac{1}{2}  \right ) X\left ( n-k+\dfrac{1}{2}, -m  \right )+ \left ( 2m-k \right )\left ( m+n+\dfrac{1}{2}  \right )X\left ( n+\dfrac{1}{2} ,k-m \right ) \notag \\[10pt]
     =&\;\dfrac{k\left ( m-2n-1 \right )}{2}X\left ( m+n+\dfrac{1}{2},k  \right ).  \label{GNS3}  
    \end{align}   
    (1) Setting $k=m=2$ in Equation \eqref{GNS3} gives 
    $\left ( 2n+5  \right ) X\left ( n+\dfrac{1}{2},0  \right )=0$,   
    hence
    \[
    X\left( m+\dfrac{1}{2},0 \right)=0\quad\text{for } m\in\mathbb{Z}.
    \]
    (2) Take $(m,k)=(7-2n,4)$.
    Since $k$ is  even,
    we have $X\left ( m+n+\dfrac{1}{2},k  \right ) =0$.
    Then
    \[
    X\left ( n+\dfrac{1}{2} ,k-m \right )=X\left ( n+\dfrac{1}{2},2n-3\right )=0
    \]
    because $2n-3\ne \pm(2n+1)$.
    Substituting  $(m,k)=(7-2n,4)$ into Equation \eqref{GNS3} yields
    \[
    \left ( 2n-7 \right ) \left ( 2n-15  \right )X\left ( n-\dfrac{7}{2}, 2n-7   \right ) =0. 
    \]
     Consequently,  
    \[
    X \left(m+\dfrac{1}{2},2m+1 \right)=0\quad\text{for } m\in\mathbb{Z}.
    \]
    (3) Take $(m,k)=(2n-7,4)$.
    We still have $X\left ( m+n+\dfrac{1}{2},k  \right ) =0$.
    Now
    $X\left ( n+\dfrac{1}{2} ,k-m \right )=X\left ( n+\dfrac{1}{2}, 11-2n\right )=0$
    because $2n-11\ne -2n-1$.
     Substituting  $(m,k)=(2n-7,4)$ into Equation \eqref{GNS3} yields
    \[
    \left ( 2n-7 \right ) \left ( 6n-13  \right )X\left ( n-\dfrac{7}{2}, 7-2n    \right ) =0,
    \]
    hence
    \[
    X \left(m+\dfrac{1}{2}, -2m-1   \right)=0\quad\text{for } m\in\mathbb{Z}.
    \]
    
    From the above statements, we conclude that
    \[
    X\left ( m+\dfrac{1}{2}, n  \right )=0 \quad \text{for } m, n \in \Z.
    \]
    Then Equation \eqref{GNS23-1} suggests that
    \[
    B\left ( m, n+\dfrac{1}{2}  \right )=0 \quad \text{for } m, n \in \Z.
    \]
    Therefore $h \in \mathrm{Ann}\left(\mathfrak{SVir}\left[\dfrac{1}{2} \right]\right)_{\overline{1}}$,
    and consequently,  
    we obtain
    \[
    \mathfrak{QDer}\left(\mathfrak{SVir}\left[\dfrac{1}{2} \right]\right)_{\overline{1}}=\mathfrak{Der}\left(\mathfrak{SVir}\left[\dfrac{1}{2} \right]\right)_{\overline{1}} \oplus \mathrm{Ann}\left(\mathfrak{SVir}\left[\dfrac{1}{2} \right]\right)_{\overline{1}}.
    \]  
    Combining Steps 1 and 2, we obtain the direct sum decomposition  
    \[
    \mathfrak{QDer}\left(\mathfrak{SVir}\left[\dfrac{1}{2} \right]\right)=\mathfrak{Der}\left(\mathfrak{SVir}\left[\dfrac{1}{2} \right]\right)\oplus\mathbb{C}\,\mathrm{id}_{\mathfrak{SVir}\left[\frac{1}{2}\right]}\oplus\mathrm{Ann}\left(\mathfrak{SVir}\left[\dfrac{1}{2} \right]\right). \qedhere
    \]  
      \end{proof}
    \end{thm}

    Similar to Proposition \ref{SQC-R}, the following proposition is true. 
    \begin{prop}
     $ \mathfrak{QC}\left(\mathfrak{SVir}\left[\dfrac{1}{2} \right]\right)=\mathbb{C}\mathrm{id}_{\mathfrak{SVir}\left[\frac{1}{2}\right]}\oplus\mathrm{Ann}\left(\mathfrak{SVir}\left[\dfrac{1}{2} \right]\right).$
    \end{prop}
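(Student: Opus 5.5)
The plan is to follow the proof of Proposition \ref{SQC-R} closely, replacing the integer indices of the odd part by half-integers $r,s\in\frac12+\Z$. One observation makes the Neveu--Schwarz case slightly simpler than the Ramond case. For $m\in\Z$ and $s\in\frac12+\Z$ we always have $\frac m2-s\neq 0$, so every structure constant of $[L_m,G_s]$ is nonzero. We also use two easy facts. First, $\mathrm{id}$ lies in $\mathfrak{QC}$. Second, every map in $\mathrm{Ann}\bigl(\mathfrak{SVir}[\frac12]\bigr)$ lies in $\mathfrak{QC}$, because $c$ is central, so both $[f(x),y]$ and $[x,f(y)]$ vanish. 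The two sums are direct because an $\mathrm{Ann}$-map has image in $\C c$ and $\mathrm{id}$ does not. It therefore suffices to show that every homogeneous super quasi-centroid map $f$ lies in $\C\,\mathrm{id}\oplus\mathrm{Ann}$.

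\emph{Even case.} Write
\[
f(L_m)=\sum_k A(m,k)L_k+Z(m)c,\qquad f(G_r)=\sum_s Y(r,s)G_s,\qquad f(c)=Ec.
\]
The pair $(L_m,L_n)$ involves only the Virasoro part. Hence it yields exactly Equations \eqref{SQC-R2} and \eqref{SQC-R3}, and the argument of Proposition \ref{SQC-R} gives $A(m,n)=\delta_{mn}A(1,1)$. Next take the pair $(L_m,G_r)$ in $[f(x),y]=[x,f(y)]$. The left side is $A(1,1)(\frac m2-r)G_{m+r}$. The right side is $\sum_s Y(r,s)(\frac m2-s)G_{m+s}$. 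Comparing coefficients of $G_{m+s}$ gives
\[
\Bigl(\tfrac m2-s\Bigr)Y(r,s)=\delta_{rs}\Bigl(\tfrac m2-r\Bigr)A(1,1).
\]
Since $\frac m2-s\neq0$, this forces $Y(r,s)=\delta_{rs}A(1,1)$. Thus $f-A(1,1)\mathrm{id}$ maps $L_m$ into $\C c$, kills every $G_r$, and sends $c$ into $\C c$, so it lies in $\mathrm{Ann}_{\overline0}$.

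\emph{Odd case.} Write
\[
h(L_m)=\sum_s B(m,s)G_s,\qquad h(G_r)=\sum_k X(r,k)L_k+Z(r)c .
\]
The element $h(c)$ is odd. The relation gives $[h(c),y]=\pm[c,h(y)]=0$ for all $y$, so $h(c)$ is central and odd, hence $h(c)=0$. Take the pair $(L_m,L_n)$ in Equation \eqref{SQC-T1} and set $n=m$. Comparing coefficients of $G_{m+s}$ gives $2(\frac m2-s)B(m,s)=0$, so $B\equiv0$; this needs no case analysis. Then the pair $(L_m,G_r)$ gives
\[
0=[h(L_m),G_r]=[L_m,h(G_r)]=\sum_k(m-k)X(r,k)L_{m+k}+(\text{central term}).
\]
Choosing $m\neq k$ yields $X(r,k)=0$ for all $r,k$. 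Hence $h(\mathfrak{SVir}[\frac12])\subseteq\C c$, that is, $h\in\mathrm{Ann}_{\overline1}$.

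I do not expect a genuine obstacle. The only step that needs any care is the Virasoro-part computation $A(m,n)=\delta_{mn}A(1,1)$. It is, however, verbatim the Ramond argument, since the even part is the same algebra $\mathfrak{Vir}$. The half-integrality of the odd indices only removes the degenerate cases that appeared in the Ramond computation.
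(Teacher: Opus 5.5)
Your plan is the intended one; the paper gives no separate argument and only says the case is similar to Proposition~\ref{SQC-R}. However, the observation you build it on is false.

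For $m\in\Z$ and $s\in\frac12+\Z$ we have $\frac m2-s=0$ exactly when $m=2s$, and $2s$ is an odd integer. For instance $[L_1,G_{1/2}]=0$.

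In the even case this does no damage. The identity $(\frac m2-s)Y(r,s)=\delta_{rs}(\frac m2-r)A(1,1)$ holds for every $m$, and at $m=0$ the factor is $-s\neq0$.

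In the odd case the step fails as written. Setting $n=m$ gives only $(m-2s)B(m,s)=0$. This leaves $B(m,\frac m2)$ undetermined for every odd $m$. So your claim that $B\equiv0$ ``needs no case analysis'' is not justified by what you wrote.

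The repair is short. Comparing coefficients of $G_t$ in $[h(L_m),L_n]=[L_m,h(L_n)]$ gives
\[
(2t-3n)\,B(m,t-n)=(3m-2t)\,B(n,t-m),\qquad m,n\in\Z,\ t\in\tfrac12+\Z .
\]
Taking $m=n=0$ yields $4t\,B(0,t)=0$, so $B(0,\cdot)=0$ because $t\neq0$. Then $n=0$ yields $2t\,B(m,t)=(3m-2t)B(0,t-m)=0$, so $B\equiv0$.

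The correct form of your observation is that $\mathrm{ad}\,L_0$ acts on $G_s$ by $-s\neq0$, so it has no zero eigenvalue on the odd part. Both the even and odd arguments should be routed through $L_0$ (or even $m$), not arbitrary $m$. With this fix, the rest of your argument goes through: the vanishing of $X$ via $m\neq k$, $h(c)=0$, and the direct-sum bookkeeping.

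One small omission: in the even case you simply wrote $f(c)=Ec$. This needs the same centrality argument you gave for $h(c)$: $[f(c),y]=[c,f(y)]=0$ for all $y$, and the centre is $\C c$.
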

    \noindent Hence, we conclude that $$\mathfrak{GDer}\left(\mathfrak{SVir}\left[\dfrac{1}{2}\right]\right)=\mathfrak{QDer}\left(\mathfrak{SVir}\left[\dfrac{1}{2}\right]\right)=\mathfrak{Der}\left(\mathfrak{SVir}\left[\dfrac{1}{2} \right]\right)\oplus\C \mathrm{id}_{\mathfrak{SVir}\left[\frac{1}{2} \right]}\oplus\mathrm{Ann}\left(\mathfrak{SVir}\left[\dfrac{1}{2} \right]\right).$$

\end{document}